\newtheorem{theorem}{Theorem}[section]
\newtheorem{corollary}[theorem]{Corollary}
\newtheorem{lemma}[theorem]{Lemma}
\theoremstyle{definition}
\DeclareMathOperator{\diag}{diag}
\title{A choice-free proof of Mal'cev's theorem on quasivarieties}
\author{Guozhen Shen}
\address{School of Philosophy\\
Wuhan University\\
No.~299 Bayi Road\\
Wuhan 430072\\
Hubei Province\\
People's Republic of China}
\email{shen\_guozhen@outlook.com}
\thanks{The author was partially supported by National Natural Science Foundation of China grant number 12101466.}
\subjclass[2020]{Primary 08C15; Secondary 03C05, 03E25}
\keywords{quasivariety, quasi-identity, reduced product, axiom of choice}
\begin{document}

\begin{abstract}
In 1966, Mal'cev proved that a class $\mathcal{K}$ of first-order structures with a specified signature is a quasivariety
if and only if $\mathcal{K}$ contains a unit and is closed under isomorphisms, substructures, and reduced products.
In this article, we present a proof of this theorem in $\mathsf{ZF}$ (the Zermelo--Fraenkel set theory without the axiom of choice).
\end{abstract}

\maketitle

\section{Introduction}
Let $\Sigma$ be a first-order signature, that is, a set of relation and function symbols, each having a fixed finite arity.
A \emph{basic Horn formula} is a formula in first-order logic of the form
\begin{equation}\label{sh01}
\phi_0\vee\cdots\vee\phi_n,
\end{equation}
where at most one of the formulas $\phi_i$ is an atomic formula, the rest being negations of atomic formulas.
We call the formula \eqref{sh01} \emph{strict} if at least one of the formulas $\phi_i$ is an atomic formula.
Atomic formulas and strict basic Horn formulas are also called \emph{identities} and \emph{quasi-identities},
respectively (see~\cite{Malcev1966}). A class $\mathcal{K}$ of $\Sigma$-structures is called a \emph{variety}
(or \emph{quasivariety}) if it is axiomatized by a set of identities (or quasi-identities), that is,
there exists a set $\Gamma$ of identities (or quasi-identities) such that $\mathcal{K}$ is the class
of all $\Sigma$-structures satisfying every formula in $\Gamma$. When $\Sigma$ contains no relation symbols,
$\Sigma$-structures are referred to as \emph{$\Sigma$-algebras}.

In 1935, Birkhoff~\cite{Birkhoff1935} proved that a class $\mathcal{K}$ of algebras of the same signature
is a variety if and only if $\mathcal{K}$ is closed under homomorphic images, subalgebras, and products.
This result is known as Birkhoff's HSP theorem and readily generalizes to first-order structures.
In 1966, Mal'cev~\cite{Malcev1966} proved that a class $\mathcal{K}$ of $\Sigma$-structures is a quasivariety
if and only if $\mathcal{K}$ contains a unit and is closed under isomorphisms, substructures, and reduced products.
Mal'cev's theorem on quasivarieties can be extended to the following result (see~\cite[Theorem~9.4.7]{Hodges1993}):
A class $\mathcal{K}$ of $\Sigma$-structures is axiomatized by a set of basic Horn formulas
if and only if $\mathcal{K}$ is closed under isomorphisms, substructures, and reduced products.

In~\cite{Andreka1981}, it is shown that the axiom of choice can be avoided in the proof of Birkhoff's HSP theorem.
The point is to use the collection principle (see~\cite[p.~65]{Jech2003}) instead of the axiom of choice.
In this article, using similar ideas, we show that Mal'cev's theorem on quasivarieties, as well as its extension stated above, 
can also be proved without the aid of the axiom of choice. We also present a new choice-free proof of Birkhoff's HSP theorem,
without using the concept of free algebras.

\section{Preliminaries}
Throughout this article, we work in $\mathsf{ZF}$ (i.e., the Zermelo--Fraenkel set theory without the axiom of choice).
To make this article self-contained, we provide in this section the necessary terminology and notation for further study.

\subsection{Notation from logic}
Fix a first-order signature $\Sigma$. For a formula $\phi$, we write $\phi(x_1,\dots,x_n)$ to indicate that
the free variables of $\phi$ are among $x_1,\dots,x_n$. When $n=0$, that is, when $\phi$ has no free variables,
$\phi$ is called a \emph{sentence}. If $A$ is a structure and $a_1,\dots,a_n\in A$,
we write $A\models\phi[a_1,\dots,a_n]$ to indicate that $a_1,\dots,a_n$ satisfy $\phi$ in $A$.
$A$ satisfies $\phi$, written $A\models\phi$, if and only if $A\models\phi[a_1,\dots,a_n]$ for all $a_1,\dots,a_n\in A$.
If $\mathcal{K}$ is a class of structures and $\Gamma$ is a set of formulas,
we write $\mathcal{K}\models\phi$ and $A\models\Gamma$ to indicate that every structure in $\mathcal{K}$ satisfies $\phi$,
and $A$ satisfies every formula in $\Gamma$, respectively.

Homomorphisms, embeddings, isomorphisms, substructures, and products are defined as usual.
A \emph{unit} is a structure $A$ consisting of a single element $a$, such that $r^A(a,\dots,a)$ holds
for all relation symbols $r$ in $\Sigma$. Note that the trivial product is a unit.
Reduced products are systematically studied in~\cite{Frayne1962}, where the following lemma is proved.

\begin{lemma}\label{sh02}
Let $\langle A_i\rangle_{i\in I}$ be a family of structures, let $\mathcal{F}$ be a filter over $I$,
and let $\prod_\mathcal{F}A_i$ denote the reduced product of $\langle A_i\rangle_{i\in I}$ modulo $\mathcal{F}$.
\begin{enumerate}[label=\upshape(\arabic*)]
  \item For every atomic formula $\phi(x_1,\dots,x_n)$ and every $\bar{a}_1,\dots,\bar{a}_n\in\prod A_i$,
        \[
        \prod_\mathcal{F}A_i\models\phi[\bar{a}_1/\mathcal{F},\dots,\bar{a}_n/\mathcal{F}]\iff
        \{i\in I\mid A_i\models\phi[\pi_i(\bar{a}_1),\dots,\pi_i(\bar{a}_n)]\}\in\mathcal{F},
        \]
        where $\pi_i$ denotes the $i$-th projection function.\label{sh08}
  \item For every basic Horn formula $\phi(x_1,\dots,x_n)$ and every $\bar{a}_1,\dots,\bar{a}_n\in\prod A_i$,\label{sh09}
        \[
        \{i\in I\mid A_i\models\phi[\pi_i(\bar{a}_1),\dots,\pi_i(\bar{a}_n)]\}\in\mathcal{F}\ \Longrightarrow\
        \prod_\mathcal{F}A_i\models\phi[\bar{a}_1/\mathcal{F},\dots,\bar{a}_n/\mathcal{F}].
        \]
\end{enumerate}
\end{lemma}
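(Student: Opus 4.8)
The plan is to prove (1) first, reduce its content to a statement about terms, and then deduce (2) from (1) using only the finite-intersection and upward-closure properties of the filter. Throughout I will rely on the explicit coordinatewise definition of the operations and relations of $\prod_\mathcal{F}A_i$, together with the fact that two elements $\bar a/\mathcal{F}$ and $\bar b/\mathcal{F}$ coincide exactly when $\{i\in I\mid\pi_i(\bar a)=\pi_i(\bar b)\}\in\mathcal{F}$.

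First I would establish a term lemma by induction on the structure of terms: for every term $t(x_1,\dots,x_n)$ and all $\bar a_1,\dots,\bar a_n\in\prod A_i$, the value of $t$ in $\prod_\mathcal{F}A_i$ at $\bar a_1/\mathcal{F},\dots,\bar a_n/\mathcal{F}$ equals $\bar b/\mathcal{F}$, where $\bar b\in\prod A_i$ is given coordinatewise by $\pi_i(\bar b)=t^{A_i}[\pi_i(\bar a_1),\dots,\pi_i(\bar a_n)]$. The variable case is immediate, and the case $t=f(t_1,\dots,t_k)$ follows by unwinding the definition of $f^{\prod_\mathcal{F}A_i}$ and applying the induction hypothesis to each $t_j$; no choice is involved since $\bar b$ is defined explicitly. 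With the term lemma in hand, (1) splits into two cases. For an equality $t_1=t_2$, the displayed equivalence is just the defining condition for equality of classes applied to the two witnesses supplied by the term lemma; for a relational atom $r(t_1,\dots,t_k)$, it is the defining clause for $r^{\prod_\mathcal{F}A_i}$ together with the term lemma.

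For (2), write $\phi=\phi_0\vee\cdots\vee\phi_m$ and suppose $X:=\{i\in I\mid A_i\models\phi[\pi_i(\bar a_1),\dots,\pi_i(\bar a_n)]\}\in\mathcal{F}$. If $\emptyset\in\mathcal{F}$ the reduced product is a unit and the conclusion is trivial, so assume $\mathcal{F}$ is proper. Arguing by contradiction, suppose $\prod_\mathcal{F}A_i\not\models\phi_j$ at the given tuple for every $j$. Each negated-atomic disjunct $\phi_j=\neg\psi_j$ then forces $\prod_\mathcal{F}A_i\models\psi_j$, so by (1) the set $Y_j:=\{i\in I\mid A_i\models\psi_j[\pi_i(\bar a_1),\dots,\pi_i(\bar a_n)]\}$ lies in $\mathcal{F}$. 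If $\phi$ has no atomic disjunct, I intersect $X$ with all the $Y_j$, obtaining a set in $\mathcal{F}$ every element $i$ of which satisfies both $A_i\models\phi$ (from $X$) and $A_i\models\psi_j$ for all $j$ (whence $A_i\not\models\phi$), which is impossible for a proper filter. If $\phi$ has a single atomic disjunct $\phi_0=\psi_0$, then $\prod_\mathcal{F}A_i\not\models\psi_0$ gives $Y_0:=\{i\in I\mid A_i\models\psi_0[\pi_i(\bar a_1),\dots,\pi_i(\bar a_n)]\}\notin\mathcal{F}$ by (1); intersecting $X$ with $Y_1,\dots,Y_m$ yields a set in $\mathcal{F}$ contained in $Y_0$ (on it the only available disjunct is $\psi_0$), so upward closure forces $Y_0\in\mathcal{F}$, a contradiction.

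I do not anticipate a genuine obstacle here: the whole argument uses only that $\mathcal{F}$ is closed under finite intersection and upward closure, applied to the finitely many disjuncts of $\phi$. The point worth stressing is exactly why the proof stays within $\mathsf{ZF}$: basic Horn formulas are quantifier-free, so unlike the full fundamental theorem for ultraproducts there is no existential-quantifier case requiring one to choose witnesses across the index set. The subtlest bookkeeping is simply the case split in (2) according to whether the atomic disjunct is present, together with dispatching the degenerate filter $\mathcal{F}=\mathcal{P}(I)$ at the outset.
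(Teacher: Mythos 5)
The paper does not actually prove Lemma~\ref{sh02}: it cites Frayne--Morel--Scott for the result and merely remarks that the proof there avoids choice. So there is no in-paper argument to compare against, but your proof is the standard one and is essentially sound. The coordinatewise term lemma (with the witness $\bar b$ explicitly definable, hence choice-free), the reduction of the two atomic cases to the definitions of equality of $\mathcal{F}$-classes and of the relations on $\prod_\mathcal{F}A_i$, and the derivation of (2) from (1) by intersecting $X$ with the finitely many sets $Y_j$ and invoking upward closure are all correct, and you correctly isolate the reason the argument stays in $\mathsf{ZF}$ (no existential quantifiers, hence no witnesses to choose across $I$).

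One concrete error: your dispatch of the improper filter is wrong. If $\emptyset\in\mathcal{F}$, then by part (1) every atomic formula holds in the reduced product, so a basic Horn formula with \emph{no} atomic disjunct --- for instance $\neg(x_1=x_1)$, which is a legitimate basic Horn formula under the paper's definition, where ``at most one'' disjunct is atomic --- is \emph{false} in the resulting unit, while the hypothesis of (2) is vacuously satisfied. In that degenerate case the conclusion is not ``trivial''; it fails, and part (2) is simply not true for improper filters except for strict basic Horn formulas. The fix is to take ``filter'' to mean proper filter (the convention of Frayne--Morel--Scott, and all the paper ever needs: the filter in Lemma~\ref{sh07} is generated by a family with the finite intersection property), or to state the exception explicitly. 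With that repair, the rest of your argument for proper $\mathcal{F}$ goes through as written.
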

Note that the proof of Lemma~\ref{sh02} does not use the axiom of choice.

\subsection{Diagrams}
For a $\Sigma$-structure $A$, $\Sigma(A)$ denotes the signature obtained from $\Sigma$ by adding a new constant symbol $\dot{a}$ for each $a\in A$.
$\diag^+(A)$ ($\diag^-(A)$) denotes the set of all (negated) atomic $\Sigma(A)$-sentences satisfied by $\langle A,a\rangle_{a\in A}$.
$\diag(A)=\diag^+(A)\cup\diag^-(A)$ is called the \emph{diagram} of $A$.
\begin{lemma}\label{sh03}
Let $A$ be a $\Sigma$-structure and let $B$ be a $\Sigma(A)$-structure.
\begin{enumerate}[label=\upshape(\arabic*)]
  \item If $B\models\diag(A)$, then $A$ can be embedded into $B|_\Sigma$,
        where $B|_\Sigma$ denotes the $\Sigma$-reduct of $B$.\label{sh04}
  \item If $B\models\diag^-(A)$, then $A$ is a homomorphic image of a substructure of $B|_\Sigma$.\label{sh05}
\end{enumerate}
\end{lemma}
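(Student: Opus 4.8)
The plan is to treat both parts by exhibiting explicit maps built from the interpretations in $B$ of the new constant symbols, so that no appeal to choice is needed at any point; everything then reduces to reading off which atomic or negated-atomic sentences belong to the diagram.

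For \ref{sh04}, I would define $h\colon A\to B|_\Sigma$ by $h(a)=\dot a^B$, the interpretation in $B$ of $\dot a$, and verify directly that $h$ is an embedding. Injectivity uses $\diag^-(A)$: if $a\neq a'$ then $\dot a\neq\dot a'$ is a negated atomic sentence true in $\langle A,a\rangle_{a\in A}$, hence lies in $\diag(A)$, so $B\models\dot a\neq\dot a'$ and $h(a)\neq h(a')$. Preservation of function symbols uses $\diag^+(A)$: if $f^A(a_1,\dots,a_n)=a$ then $f(\dot a_1,\dots,\dot a_n)=\dot a$ lies in $\diag^+(A)$, and evaluating in $B$ gives $f^{B}(h(a_1),\dots,h(a_n))=h(a)$. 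Preservation and reflection of relation symbols uses both halves of the diagram, since $r^A(a_1,\dots,a_n)$ puts $r(\dot a_1,\dots,\dot a_n)$ into $\diag^+(A)$ while its failure puts $\neg r(\dot a_1,\dots,\dot a_n)$ into $\diag^-(A)$. As $h$ is given by an explicit formula, the construction is choice-free.

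For \ref{sh05}, where only $\diag^-(A)$ is available, I would let $C$ be the $\Sigma$-substructure of $B|_\Sigma$ generated by $\{\dot a^B\mid a\in A\}$, concretely $C=\{t^B\mid t\text{ a closed }\Sigma(A)\text{-term}\}$, and define $g\colon C\to A$ as the set of pairs $\{(t^B,t^A)\mid t\text{ a closed }\Sigma(A)\text{-term}\}$, where $t^A$ and $t^B$ denote the values of $t$ in $\langle A,a\rangle_{a\in A}$ and in $B$ respectively. I expect the main obstacle to be showing that this relation is single-valued, that is, that $g$ is genuinely a function, and doing so without choice. This is exactly where $\diag^-(A)$ enters: if $s^A\neq t^A$ for closed terms $s,t$, then $s\neq t$ is a negated atomic sentence in $\diag^-(A)$, so $B\models s\neq t$ and $s^B\neq t^B$; contrapositively $s^B=t^B$ forces $s^A=t^A$. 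Because $g$ is presented directly as a definable set of pairs rather than by selecting a representative term for each element of $C$, single-valuedness is all that is required and no choice is invoked.

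It then remains to check that $g$ is a surjective homomorphism. Surjectivity is immediate from $g(\dot a^B)=a$ for each $a\in A$. Preservation of function symbols follows since $f(t_1,\dots,t_n)$ is again a closed term, whence $g(f^{B}(t_1^B,\dots,t_n^B))=f^A(t_1^A,\dots,t_n^A)=f^A(g(t_1^B),\dots,g(t_n^B))$. Preservation of relation symbols again uses $\diag^-(A)$: were $r^A(t_1^A,\dots,t_n^A)$ to fail while $r^{B}(t_1^B,\dots,t_n^B)$ held, then $\neg r(t_1,\dots,t_n)\in\diag^-(A)$ would yield $B\models\neg r(t_1,\dots,t_n)$, a contradiction. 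Thus $A$ is the homomorphic image of the substructure $C$ of $B|_\Sigma$, completing the plan.
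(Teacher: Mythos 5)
Your proposal is correct and follows the paper's proof exactly: part \ref{sh04} is the standard diagram-embedding argument (the paper cites Hodges, Lemma~1.4.2, whose proof is the map $a\mapsto\dot a^B$ you describe), and for part \ref{sh05} you construct precisely the same set of pairs $\{\langle t^B,t^{\langle A,a\rangle_{a\in A}}\rangle\mid t\text{ a closed }\Sigma(A)\text{-term}\}$ that the paper exhibits, merely supplying the verification (single-valuedness, surjectivity, preservation) that the paper leaves as ``easy to see.''
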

\begin{proof}
For \ref{sh04}, see~\cite[Lemma~1.4.2]{Hodges1993}.
For \ref{sh05}, if $B\models\diag^-(A)$, then it is easy to see that
\[
\{\langle t^B,t^{\langle A,a\rangle_{a\in A}}\rangle\mid t\text{ is a closed $\Sigma(A)$-term}\}
\]
is a homomorphism from the substructure of $B|_\Sigma$ generated by $\{\dot{a}^B\mid a\in A\}$ onto $A$.
\end{proof}

\subsection{The collection principle}
A \emph{class relation} is a class that consists of ordered pairs.
The collection principle, which is a theorem in $\mathsf{ZF}$, 
can be stated as the following lemma.

\begin{lemma}\label{sh06}
Let $R$ be a class relation. For every set $S$ such that for every $x\in S$ there is $y$ for which $\langle x,y\rangle\in S$,
there exists a set $I\subseteq R$ such that for every $x\in S$ there is $y$ for which $\langle x,y\rangle\in I$.
\end{lemma}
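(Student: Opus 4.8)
The plan is to reduce Lemma~\ref{sh06} to the standard interplay between the Replacement schema and the von Neumann rank function; this is precisely where the Axiom of Foundation enters, and it is the only nontrivial ingredient. Since $R$ is a class relation, it is given by a formula $\phi(x,y)$ (possibly with parameters), so the lemma is in fact a theorem schema, one instance per $\phi$. By hypothesis, for every $x\in S$ there is some $y$ with $\langle x,y\rangle\in R$, and by Foundation every such $y$ lies in some level $V_\beta$ of the cumulative hierarchy.

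First I would define, for each $x\in S$, the ordinal
\[
f(x)=\min\{\beta\mid \exists\, y\in V_\beta\ (\langle x,y\rangle\in R)\}.
\]
This is well-defined precisely because the hypothesis supplies at least one witness $y$ for each $x$, and every set has a rank. The assignment $x\mapsto f(x)$ is definable, so by Replacement the image $\{f(x)\mid x\in S\}$ is a set of ordinals; let $\alpha=\bigcup\{f(x)\mid x\in S\}$ be its union, which is again an ordinal.

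Next, since the hierarchy is cumulative (so $V_\beta\subseteq V_\alpha$ whenever $\beta\le\alpha$), for every $x\in S$ there is a witness $y\in V_\alpha$ with $\langle x,y\rangle\in R$. I would then set
\[
I=\{\langle x,y\rangle\mid x\in S,\ y\in V_\alpha,\ \langle x,y\rangle\in R\}.
\]
Because $S\times V_\alpha$ is a set, Separation guarantees that $I$ is a set; plainly $I\subseteq R$, and the previous step shows that for every $x\in S$ there is $y$ with $\langle x,y\rangle\in I$, as required.

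The only genuine obstacle is the appeal to the rank function, that is, to the Axiom of Foundation: without it one could not guarantee that each witness $y$ sits in some $V_\beta$, and the collection step would break down. Since we work in full $\mathsf{ZF}$, which includes Foundation, this causes no trouble, and—crucially for the aims of this article—the entire argument uses only Replacement, Separation, and Foundation, never the Axiom of Choice. Everything beyond the rank estimate is routine.
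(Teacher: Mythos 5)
Your proof is correct and is precisely the standard derivation of the Collection Principle in $\mathsf{ZF}$ via the rank function, Replacement, and Separation --- the same argument the paper delegates to the citation of Jech (p.~65), so there is no genuine divergence in approach. The only remark worth making is that the hypothesis of the lemma as printed should read $\langle x,y\rangle\in R$ rather than $\langle x,y\rangle\in S$, which you have correctly read it as.
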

\begin{proof}
See~\cite[p.~65]{Jech2003}.
\end{proof}

\section{The main results}
In this section, we provide choice-free proofs of several well-known theorems.

\subsection{A choice-free proof of Mal'cev's theorem on quasivarieties}
We first prove the following result, which is \cite[Lemma~9.4.6]{Hodges1993}.
\begin{lemma}\label{sh07}
Let $\mathcal{K}$ a class of $\Sigma$-structures and let $\Delta$ denote
the set of all basic Horn formulas $\phi$ such that $\mathcal{K}\models\phi$.
For every $\Sigma$-structure $A$, if $A\models\Delta$, then $A$ can be embedded
into some reduced product of structures in $\mathcal{K}$.
\end{lemma}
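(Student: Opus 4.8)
The plan is to realize $A$ inside a reduced product by building, over a carefully chosen index set, a family of expansions of $\mathcal{K}$-structures whose reduced product satisfies the full diagram $\diag(A)$; Lemma~\ref{sh03}\ref{sh04} then yields the desired embedding. The guiding computation is Lemma~\ref{sh02}\ref{sh08}: if $\langle A_i\rangle_{i\in I}$ are $\Sigma(A)$-structures with $A_i|_\Sigma\in\mathcal{K}$, and each $\dot a$ is interpreted in the reduced product as the class of $i\mapsto\dot a^{A_i}$, then for every atomic $\Sigma(A)$-sentence $\theta$ we have $\prod_\mathcal{F}A_i\models\theta$ iff $\{i\in I\mid A_i\models\theta\}\in\mathcal{F}$. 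Hence, writing $P_\theta=\{i\in I\mid A_i\models\theta\}$ for each atomic $\Sigma(A)$-sentence $\theta$, it suffices to produce a \emph{proper} filter $\mathcal{F}$ on $I$ with $P_\theta\in\mathcal{F}$ for every $\theta\in\diag^+(A)$ while $P_\psi\notin\mathcal{F}$ for every atomic $\psi$ with $\neg\psi\in\diag^-(A)$. The decisive point is that the negated requirements only ask for certain sets to lie \emph{outside} $\mathcal{F}$, so an ultrafilter is never needed; this is what keeps the argument within $\mathsf{ZF}$.

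First I would prove the finite-approximation step that feeds on $A\models\Delta$: for every finite $s\subseteq\diag^+(A)$ and every $e\in\diag^-(A)\cup\{*\}$ there is a $\Sigma(A)$-structure $N$ with $N|_\Sigma\in\mathcal{K}$ satisfying every sentence of $s$, and also satisfying $e$ when $e\in\diag^-(A)$. Replacing the finitely many constants $\dot a$ occurring in $s\cup\{e\}$ by variables turns the members of $s$ into atomic formulas and $\psi$ (when $e=\neg\psi$) into an atomic formula. If no such $N$ existed, then $\mathcal{K}$ would satisfy $\bigwedge s\to\psi$ in the case $e=\neg\psi$, and $\neg\bigwedge s$ in the case $e=*$; both are basic Horn formulas, hence lie in $\Delta$. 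But $A$ satisfies every member of $s$ under the assignment $\dot a\mapsto a$ while falsifying $\psi$ (resp.\ while satisfying $\bigwedge s$), so $A$ would violate a formula of $\Delta$, contradicting $A\models\Delta$.

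Next comes the choice-free heart of the proof. Let $P=[\diag^+(A)]^{<\omega}\times(\diag^-(A)\cup\{*\})$, which is a set, and let $R$ be the class relation pairing each request $p\in P$ with a witness $N$ as above. By the finite-approximation step every $p\in P$ has a witness, so the collection principle (Lemma~\ref{sh06}) yields a \emph{set} $I\subseteq R$ meeting every $p\in P$. I would index the family by $I$, putting $A_{\langle p,N\rangle}=N$, so that $\langle A_i\rangle_{i\in I}$ is a genuine set-indexed family of expansions of $\mathcal{K}$-structures. Let $\mathcal{F}$ be the filter on $I$ generated by $\{P_\theta\mid\theta\in\diag^+(A)\}$. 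Properness holds because each finite $s\subseteq\diag^+(A)$ is the first coordinate of some request, whose witness lies in $\bigcap_{\theta\in s}P_\theta$; the same witnesses show $P_\psi\notin\mathcal{F}$, since for every finite $s$ the request $\langle s,\neg\psi\rangle$ supplies an index in $\bigcap_{\theta\in s}P_\theta$ satisfying $\neg\psi$, so no finite intersection of generators is contained in $P_\psi$. With Lemma~\ref{sh02}\ref{sh08} this gives $\prod_\mathcal{F}A_i\models\diag(A)$, and Lemma~\ref{sh03}\ref{sh04} embeds $A$ into its $\Sigma$-reduct, which is a reduced product of structures in $\mathcal{K}$.

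The step I expect to require the most care is the interplay in the previous paragraph between properness of $\mathcal{F}$ and the exclusion of the sets $P_\psi$: one must verify that demanding $P_\theta\in\mathcal{F}$ for all positive $\theta$ is compatible with $P_\psi\notin\mathcal{F}$ for all negative $\neg\psi$, and this compatibility rests entirely on never needing more than a single negated sentence together with finitely many positive ones at a time, precisely the configuration that the basic Horn formulas of $\Delta$ control. A secondary point demanding attention is the honest bookkeeping for the collection principle: the witnesses must be packaged as a class relation so that Lemma~\ref{sh06} returns an actual set $I$, since it is exactly here that a naive argument would invoke the axiom of choice to select one witness per request.
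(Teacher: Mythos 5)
Your proposal is correct and follows essentially the same route as the paper's proof: the finite-satisfiability step via basic Horn formulas in $\Delta$, the collection principle to extract a set-indexed family of witnesses over $[\diag^+(A)]^{<\omega}\times(\diag^-(A)\cup\{*\})$, a filter generated by the positive-diagram conditions, and the embedding via Lemma~\ref{sh03}\ref{sh04}. The only cosmetic difference is that you generate the filter from the sets $\{i\mid A_i\models\theta\}$ rather than from the sets of indices whose request contains $\theta$, which changes nothing in the argument.
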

\begin{proof}
Let $A$ be a $\Sigma$-structure such that $A\models\Delta$.
Let $\mathcal{K}'$ be the class of all $\Sigma(A)$-structures $B$ such that $B|_\Sigma\in\mathcal{K}$.
Let $\theta_0,\dots,\theta_{n-1}\in\diag^+(A)$ and let $\theta_n$ be an atomic sentence such that
$\neg\theta_n\in\diag^-(A)$. We claim that $\{\theta_0,\dots,\theta_{n-1},\neg\theta_n\}$ has a model in $\mathcal{K}'$.

We assume the contrary and aim for a contradiction. Then
\[
\mathcal{K}'\models\neg\theta_0\vee\dots\vee\neg\theta_{n-1}\vee\theta_n.
\]
Let $\phi(x_1,\dots,x_m)$ be a basic Horn formula and let $a_1,\dots,a_m$ be pairwise distinct elements of $A$
such that $\neg\theta_0\vee\dots\vee\neg\theta_{n-1}\vee\theta_n$ is $\phi(\dot{a}_1,\dots,\dot{a}_m)$.
Then it follows that $\mathcal{K}'\models\phi(\dot{a}_1,\dots,\dot{a}_m)$,
which implies that $\mathcal{K}\models\phi$ and thus $\phi\in\Delta$.
Hence, $A\models\phi$, which implies that $A\models\phi[a_1,\dots,a_m]$
and so $\langle A,a\rangle_{a\in A}\models\neg\theta_0\vee\dots\vee\neg\theta_{n-1}\vee\theta_n$,
contradicting that $\theta_0,\dots,\theta_{n-1}\in\diag^+(A)$ and $\neg\theta_n\in\diag^-(A)$.

Similarly, when $\diag^-(A)=\varnothing$, $\{\theta_0,\dots,\theta_{n-1}\}$ has a model in $\mathcal{K}'$. Let
\[
S=[\diag^+(A)]^{<\omega}\times[\diag^-(A)]^{\leqslant1}.
\]
By Lemma~\ref{sh06}, there exists a set
\[
I\subseteq\{\langle\Theta,\Xi,B\rangle\mid\langle\Theta,\Xi\rangle\in S,B\in\mathcal{K}'\text{ and }B\models\Theta\cup\Xi\}
\]
such that for every $\langle\Theta,\Xi\rangle\in S$ there is $B$ for which $\langle\Theta,\Xi,B\rangle\in I$.

For all $i\in I$, let $\Theta_i$, $\Xi_i$, and $B_i$ denote the first, second, and third coordinates of $i$, respectively. Let
\[
\mathcal{X}=\{\{i\in I\mid\theta\in\Theta_i\}\mid\theta\in\diag^+(A)\}.
\]
It is easy to see that $\mathcal{X}$ has the finite intersection property and thus generates a filter $\mathcal{F}$ over $I$.
For every $\theta\in\diag^+(A)$, since $\{i\in I\mid\theta\in\Theta_i\}\subseteq\{i\in I\mid B_i\models\theta\}$,
it follows that $\{i\in I\mid B_i\models\theta\}\in\mathcal{F}$,
and therefore $\prod_{\mathcal{F}}B_i\models\theta$ by Lemma~\ref{sh02}\ref{sh08}.
Hence, $\prod_{\mathcal{F}}B_i\models\diag^+(A)$.
For all atomic sentences $\xi$ such that $\neg\xi\in\diag^-(A)$
and all $\Theta\in[\diag^+(A)]^{<\omega}$, there is an $i\in I$ such that $\Theta_i=\Theta$ and $\Xi_i=\{\neg\xi\}$,
and thus $\bigcap_{\theta\in\Theta}\{i\in I\mid\theta\in\Theta_i\}\nsubseteq\{i\in I\mid B_i\models\xi\}$,
which implies that $\{i\in I\mid B_i\models\xi\}\notin\mathcal{F}$ and thus
$\prod_{\mathcal{F}}B_i\not\models\xi$ by Lemma~\ref{sh02}\ref{sh08},
that is, $\prod_{\mathcal{F}}B_i\models\neg\xi$. So $\prod_{\mathcal{F}}B_i\models\diag(A)$,
which implies that $A$ can be embedded into $\prod_{\mathcal{F}}B_i|_\Sigma$ by Lemma~\ref{sh03}\ref{sh04}.
Finally, for every $i\in I$, since $B_i\in\mathcal{K}'$, we have $B_i|_\Sigma\in\mathcal{K}$.
\end{proof}

\begin{theorem}\label{sh10}
A class $\mathcal{K}$ of $\Sigma$-structures is axiomatized by a set of basic Horn formulas
if and only if $\mathcal{K}$ is closed under isomorphisms, substructures, and reduced products.
\end{theorem}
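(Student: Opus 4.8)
The plan is to prove the two implications separately, with the forward direction reducing to Lemma~\ref{sh02} and the backward direction essentially reducing to Lemma~\ref{sh07}.

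For the forward implication, suppose $\mathcal{K}$ is the class of all $\Sigma$-structures satisfying some set $\Gamma$ of basic Horn formulas. Closure under isomorphisms is immediate, since satisfaction of a formula is invariant under isomorphism. For substructures, the key observation is that atomic formulas are absolute between a structure and its substructures, so a basic Horn formula, being a disjunction of atomic and negated atomic formulas, has the same truth value at a given tuple in a substructure as in the ambient structure; hence the universal closure of each $\phi\in\Gamma$, being a universal sentence, passes to substructures. For reduced products I would invoke Lemma~\ref{sh02}\ref{sh09} directly: if every factor $A_i$ satisfies a basic Horn formula $\phi$ at all tuples, then for any tuple in the reduced product the index set on which $\phi$ holds is all of $I$, which lies in $\mathcal{F}$, so $\prod_\mathcal{F}A_i\models\phi$. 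This is precisely the point at which the restriction to basic Horn (rather than arbitrary) formulas is used, since only the one-directional implication of Lemma~\ref{sh02}\ref{sh09} is available.

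For the backward implication, assume $\mathcal{K}$ is closed under isomorphisms, substructures, and reduced products, and let $\Delta$ be the set of all basic Horn formulas $\phi$ with $\mathcal{K}\models\phi$, exactly as in Lemma~\ref{sh07}. I claim $\mathcal{K}$ is the class of models of $\Delta$. One inclusion is trivial: every member of $\mathcal{K}$ satisfies every formula in $\Delta$ by definition. For the reverse inclusion, take any $A$ with $A\models\Delta$. By Lemma~\ref{sh07}, $A$ embeds into $\prod_\mathcal{F}B_i|_\Sigma$ for some family of $\Sigma(A)$-structures with each $B_i|_\Sigma\in\mathcal{K}$. Since taking $\Sigma$-reducts commutes with reduced products, $\prod_\mathcal{F}(B_i|_\Sigma)$ is a reduced product of members of $\mathcal{K}$, hence lies in $\mathcal{K}$ by closure under reduced products; as $A$ is isomorphic to a substructure of it, closure under substructures and isomorphisms yields $A\in\mathcal{K}$.

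I do not expect a serious obstacle in the theorem itself, since the genuinely hard, choice-sensitive work has already been carried out in Lemma~\ref{sh07} via the collection principle. The only points requiring care are the two routine preservation facts in the forward direction — the absoluteness of atomic formulas under substructures and the commutation of reducts with reduced products — neither of which uses the axiom of choice. Assembling these with Lemma~\ref{sh07} completes the proof in $\mathsf{ZF}$.
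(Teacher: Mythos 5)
Your proposal is correct and follows essentially the same route as the paper: the forward direction via preservation of quantifier-free formulas under isomorphisms and substructures together with Lemma~\ref{sh02}\ref{sh09}, and the backward direction by taking $\Delta$ to be all basic Horn formulas valid in $\mathcal{K}$ and applying Lemma~\ref{sh07} plus the closure hypotheses. The extra remarks you include (absoluteness of atomic formulas in substructures, commutation of reducts with reduced products) are accurate and merely make explicit what the paper leaves implicit.
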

\begin{proof}
If $\mathcal{K}$ is axiomatized by a set of basic Horn formulas,
then clearly $\mathcal{K}$ is closed under isomorphisms and substructures
since quantifier-free formulas are preserved by isomorphisms and substructures,
and it is closed under reduced products by Lemma~\ref{sh02}\ref{sh09}.
For the other direction, suppose that $\mathcal{K}$ is closed under isomorphisms, substructures, and reduced products.
Let $\Delta$ denote the set of all basic Horn formulas $\phi$ such that $\mathcal{K}\models\phi$. By Lemma~\ref{sh07},
for every $\Sigma$-structure $A$, if $A\models\Delta$, then $A$ can be embedded into some reduced product of structures in $\mathcal{K}$,
and therefore $A\in\mathcal{K}$. This means that $\mathcal{K}$ is axiomatized by $\Delta$.
\end{proof}

\begin{corollary}
A class $\mathcal{K}$ of $\Sigma$-structures is a quasivariety
if and only if $\mathcal{K}$ contains a unit and is closed under isomorphisms, substructures, and reduced products.
\end{corollary}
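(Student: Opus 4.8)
The plan is to derive the corollary from Theorem~\ref{sh10} by pinning down exactly what the presence of a unit contributes. Recall that a quasi-identity is precisely a \emph{strict} basic Horn formula, so the two notions differ only in whether a positive atomic disjunct is required. The guiding observation is that a unit satisfies every atomic formula: its sole element makes every equation $t_1 = t_2$ hold, and by definition every relation symbol holds on the constant tuple $(a,\dots,a)$. Consequently a unit satisfies every strict basic Horn formula, since its atomic disjunct is true, while it falsifies every non-strict one, being a disjunction of negated atomic formulas each of whose disjuncts is false. This dichotomy is the engine of both directions.

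For the forward direction, suppose $\mathcal{K}$ is a quasivariety, say axiomatized by a set $\Gamma$ of quasi-identities. Since quasi-identities are in particular basic Horn formulas, Theorem~\ref{sh10} immediately yields that $\mathcal{K}$ is closed under isomorphisms, substructures, and reduced products. For the unit, I would fix the one-element $\Sigma$-structure $U$ in which every relation symbol holds (such a structure exists for any $\Sigma$); by the observation above $U \models \Gamma$, so $U \in \mathcal{K}$, and hence $\mathcal{K}$ contains a unit.

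For the converse, suppose $\mathcal{K}$ contains a unit $U$ and is closed under isomorphisms, substructures, and reduced products. Let $\Delta$ be the set of all basic Horn formulas $\phi$ with $\mathcal{K} \models \phi$, exactly as in the proof of Theorem~\ref{sh10}, so that $\mathcal{K}$ is axiomatized by $\Delta$. The key step is to show that every member of $\Delta$ is in fact strict, i.e., a quasi-identity. Indeed, if some $\phi \in \Delta$ were non-strict, then $\phi$ would be a disjunction of negated atomic formulas; since $U \in \mathcal{K}$ and $\mathcal{K} \models \phi$, we would have $U \models \phi$, contradicting the observation that a unit falsifies every non-strict basic Horn formula. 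Hence $\Delta$ consists entirely of quasi-identities, and $\mathcal{K}$, being axiomatized by $\Delta$, is a quasivariety.

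The main obstacle is slight, and it is entirely encapsulated in the converse: one must verify that admitting a unit into $\mathcal{K}$ is exactly strong enough to rule out the non-strict basic Horn formulas from the axiomatization produced by Theorem~\ref{sh10}. Everything else is bookkeeping, and no further appeal to the collection principle or to any choice principle is needed beyond what Theorem~\ref{sh10} already supplies.
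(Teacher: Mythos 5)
Your proof is correct and follows essentially the same route as the paper: apply Theorem~\ref{sh10} and use the fact that a unit satisfies every atomic formula (hence every quasi-identity) but falsifies every non-strict basic Horn formula, so the axiomatizing set $\Delta$ must consist of strict formulas. Your write-up merely spells out the details the paper leaves as ``obvious.''
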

\begin{proof}
The necessity is obvious. For the sufficiency, suppose that $\mathcal{K}$ contains a unit
and is closed under isomorphisms, substructures, and reduced products. By Theorem~\ref{sh10},
$\mathcal{K}$ is axiomatized by a set $\Delta$ of basic Horn formulas, and since $\mathcal{K}$ contains a unit,
it follows that all formulas in $\Delta$ are strict.
\end{proof}

\subsection{A new choice-free proof of Birkhoff's HSP theorem}
\begin{lemma}\label{sh11}
Let $\mathcal{K}$ a class of $\Sigma$-structures and let $\Delta$ denote
the set of all atomic formulas $\phi$ such that $\mathcal{K}\models\phi$.
For every $\Sigma$-structure $A$, if $A\models\Delta$, then $A$ is a homomorphic
image of a substructure of a product of structures in $\mathcal{K}$.
\end{lemma}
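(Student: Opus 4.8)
The plan is to follow the proof of Lemma~\ref{sh07} as a template, simplifying it at each point where the weaker conclusion (a homomorphic image of a substructure of a product, rather than an embedding into a reduced product) allows. Two simplifications drive everything. First, since Lemma~\ref{sh03}\ref{sh05} produces $A$ as a homomorphic image of a substructure from a model of $\diag^-(A)$ alone, the positive diagram $\diag^+(A)$ is never needed; only negated atomic sentences must be reflected into the product. Second, an honest product is the reduced product modulo the trivial filter $\mathcal{F}=\{I\}$, and for this filter Lemma~\ref{sh02}\ref{sh08} reads: an atomic $\Sigma(A)$-sentence holds in $\prod_{\mathcal{F}}B_i$ if and only if it holds in every factor. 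As in Lemma~\ref{sh07}, I set $\mathcal{K}'$ to be the class of all $\Sigma(A)$-structures $B$ with $B|_\Sigma\in\mathcal{K}$.

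The key claim, proved exactly as the corresponding claim in Lemma~\ref{sh07}, is that for every atomic $\Sigma(A)$-sentence $\xi$ with $\neg\xi\in\diag^-(A)$ there exists $B\in\mathcal{K}'$ with $B\models\neg\xi$. Assuming otherwise gives $\mathcal{K}'\models\xi$; writing $\xi$ as $\phi(\dot{a}_1,\dots,\dot{a}_m)$ for an atomic formula $\phi(x_1,\dots,x_m)$ and pairwise distinct $a_1,\dots,a_m\in A$, one obtains $\mathcal{K}\models\phi$, so $\phi\in\Delta$, so $A\models\phi$, and hence $\langle A,a\rangle_{a\in A}\models\xi$, contradicting $\neg\xi\in\diag^-(A)$. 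This is the only step that uses the hypothesis $A\models\Delta$, and it is precisely here that $\Delta$ being the set of \emph{atomic} formulas true in $\mathcal{K}$ matters: the negation of a single negated atomic sentence is again atomic.

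Next I would apply the collection principle, Lemma~\ref{sh06}, to the set $S$ of all atomic $\Sigma(A)$-sentences $\xi$ with $\neg\xi\in\diag^-(A)$, obtaining a set $I$ of pairs $\langle\xi,B\rangle$ with $B\in\mathcal{K}'$ and $B\models\neg\xi$, meeting every $\xi\in S$ as a first coordinate. Writing $B_i$ for the second coordinate of $i\in I$ and taking $\mathcal{F}=\{I\}$, I would check $\prod_{\mathcal{F}}B_i\models\diag^-(A)$: given $\neg\xi\in\diag^-(A)$, some $i\in I$ has $B_i\models\neg\xi$, so $\{i\in I\mid B_i\models\xi\}\neq I$ is not in $\mathcal{F}$, whence $\prod_{\mathcal{F}}B_i\models\neg\xi$ by Lemma~\ref{sh02}\ref{sh08}. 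By Lemma~\ref{sh03}\ref{sh05}, $A$ is then a homomorphic image of a substructure of $\prod_{\mathcal{F}}B_i|_\Sigma$; and since $\mathcal{F}$ is trivial, $\prod_{\mathcal{F}}B_i|_\Sigma$ is the ordinary product of $\langle B_i|_\Sigma\rangle_{i\in I}$, each factor $B_i|_\Sigma$ lying in $\mathcal{K}$, which is exactly what is claimed.

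The part I expect to require the most thought is recognizing that moving to a genuine product, rather than a nontrivial reduced product, both suffices and is in fact what is wanted: reflecting a negated atomic sentence into a product needs only a single factor that falsifies the corresponding atomic sentence, and producing such factors is precisely what the claim together with the collection principle delivers, with no filter-theoretic argument on the positive side. The only degenerate case is $\diag^-(A)=\varnothing$, which forces $A$ to be a unit; then $S=\varnothing$, the index set $I$ may be taken empty, the product over the empty family is a unit, and $A$, being a unit, is a homomorphic image of a substructure of it, so the statement still holds.
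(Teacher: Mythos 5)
Your proposal is correct and follows essentially the same route as the paper's own proof: establish via the hypothesis $A\models\Delta$ that each $\neg\xi\in\diag^-(A)$ has a model in $\mathcal{K}'$, apply the collection principle to index such models, observe that the (trivial-filter reduced) product satisfies $\diag^-(A)$, and conclude with Lemma~\ref{sh03}\ref{sh05}. Your explicit treatment of the case $\diag^-(A)=\varnothing$ is a harmless addition that the paper leaves implicit.
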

\begin{proof}
Let $A$ be a $\Sigma$-structure such that $A\models\Delta$.
Let $\mathcal{K}'$ be the class of all $\Sigma(A)$-structures $B$ such that $B|_\Sigma\in\mathcal{K}$.
As in the proof of Lemma~\ref{sh07}, it is easily seen that every sentence in $\diag^-(A)$ has a model in $\mathcal{K}'$.
By Lemma~\ref{sh06}, there is a set
\[
I\subseteq\{\langle\neg\xi,B\rangle\mid\neg\xi\in\diag^-(A),B\in\mathcal{K}'\text{ and }B\models\neg\xi\}
\]
such that for every $\neg\xi\in\diag^-(A)$ there is $B$ for which $\langle\neg\xi,B\rangle\in I$.

For all $i\in I$, let $\neg\xi_i$ and $B_i$ be the first and second coordinates of $i$, respectively.
For every $\neg\xi\in\diag^-(A)$, there is an $i\in I$ such that $\xi_i=\xi$, and therefore $B_i\not\models\xi$,
which implies that $\prod B_i\not\models\xi$, that is, $\prod B_i\models\neg\xi$.
Hence, $\prod B_i\models\diag^-(A)$. By Lemma~\ref{sh03}\ref{sh05},
$A$ is a homomorphic image of a substructure of $\prod B_i|_\Sigma$.
Finally, for every $i\in I$, since $B_i\in\mathcal{K}'$, we have $B_i|_\Sigma\in\mathcal{K}$.
\end{proof}

\begin{theorem}\label{sh12}
A class $\mathcal{K}$ of $\Sigma$-structures is a variety
if and only if $\mathcal{K}$ is closed under homomorphic images, substructures, and products.
\end{theorem}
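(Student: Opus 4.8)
The plan is to mirror the proof of Theorem~\ref{sh10}, replacing Lemma~\ref{sh07} with Lemma~\ref{sh11} and substituting the three closure properties accordingly. The genuinely choice-free content is already packaged in Lemma~\ref{sh11}, so the theorem itself should reduce to a routine chaining of preservation and closure facts.

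For the forward direction, I would suppose that $\mathcal{K}$ is axiomatized by a set $\Gamma$ of atomic formulas (identities) and derive each closure property from a preservation property of atomic formulas. Atomic formulas are absolute between a structure and its substructures for tuples drawn from the substructure, so any substructure of a member of $\mathcal{K}$ still satisfies every formula in $\Gamma$ and hence lies in $\mathcal{K}$. Satisfaction of an atomic formula in a product reduces to satisfaction in each factor, so any product of members of $\mathcal{K}$ again satisfies $\Gamma$. Finally, atomic formulas are preserved along homomorphisms: if $A\models\phi[\bar{a}]$ for an atomic $\phi$ and $h$ is a homomorphism, then $B\models\phi[h(\bar{a})]$, and when $h$ is surjective this gives $B\models\phi$. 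Since closure under homomorphic images subsumes closure under isomorphisms, no separate clause is needed.

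For the other direction, I would suppose $\mathcal{K}$ is closed under homomorphic images, substructures, and products, and let $\Delta$ be the set of all atomic formulas $\phi$ with $\mathcal{K}\models\phi$. I claim $\mathcal{K}$ is axiomatized by $\Delta$. As $\mathcal{K}\models\Delta$ holds by definition, it suffices to show that every $\Sigma$-structure $A$ with $A\models\Delta$ belongs to $\mathcal{K}$. By Lemma~\ref{sh11}, such an $A$ is a homomorphic image of a substructure of a product of structures in $\mathcal{K}$; applying closure under products, then substructures, then homomorphic images in that order yields $A\in\mathcal{K}$.

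The only point demanding care is the homomorphism clause of the forward direction: preservation of atomic formulas runs solely in the forward direction along a homomorphism, so the surjectivity of $h$ is essential to pass from satisfaction on the image to satisfaction on all tuples of the target. Everything else is bookkeeping, since the substantive work—eliminating the axiom of choice through the collection principle (Lemma~\ref{sh06})—has already been carried out inside Lemma~\ref{sh11}.
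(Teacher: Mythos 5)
Your proposal is correct and matches the paper's proof: the paper likewise dismisses the forward direction as routine preservation of atomic formulas and obtains the converse by applying Lemma~\ref{sh11} together with closure under products, substructures, and homomorphic images. The only difference is that you spell out the preservation arguments (including the role of surjectivity) that the paper leaves as ``obvious.''
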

\begin{proof}
The necessity is obvious. For the sufficiency, suppose that $\mathcal{K}$ is closed under homomorphic images, substructures, and products.
Let $\Delta$ denote the set of all atomic formulas $\phi$ such that $\mathcal{K}\models\phi$. By Lemma~\ref{sh11},
for every $\Sigma$-structure $A$, if $A\models\Delta$, then $A$ is a homomorphic
image of a substructure of a product of structures in $\mathcal{K}$,
and so $A\in\mathcal{K}$. This means that $\mathcal{K}$ is axiomatized by $\Delta$, and thus, it is a variety.
\end{proof}

Note that, when $\Sigma$ contains no relation symbols, Theorem~\ref{sh12} is the usual Birkhoff's HSP theorem.
Note also that the proof given here differs from the proof in~\cite{Andreka1981} in that it does not rely on the concept of free algebras.

\end{document}